\newcommand{\Rd}{\mathbb{R}^d}
\newcommand{\R}{\mathbb{R}}
\newtheorem{theorem}{Theorem}
\newtheorem{assumption}[theorem]{Assumption}
\newtheorem{lemma}[theorem]{Lemma}
\newtheorem{proposition}[theorem]{Proposition}
\title{Quantitative Flow Approximation Properties of Narrow Neural ODEs}
\author{Karthik Elamvazhuthi} 
\affil{Applied Mathematics and Plasma Physics Division\\
Los Alamos National Laboratory, USA}
\date{May 2024}
\begin{document}

\maketitle






\maketitle

\begin{abstract}%
In this note, we revisit the problem of flow  approximation properties of neural ordinary differential equations (NODEs) of the form $\dot{x} = A(t)\Sigma(W(t)x + b(t))$. The approximation properties have been considered as a flow controllability problem in recent literature. The neural ODE is considered {\it narrow} when the parameters have dimension equal to the input of the neural network, and hence have limited width. We derive the relation of narrow NODEs in approximating flows of shallow but wide NODEs of the form $\dot{x} = \sum_{i=1}^m A_i\Sigma(W_ix + b_i) $. Due to existing results on approximation properties of shallow neural networks, this facilitates understanding which kind of flows of dynamical systems can be approximated using narrow neural 
ODEs. While approximation properties of narrow NODEs have been established in literature, the proofs often involve extensive constructions or require invoking deep controllability theorems from control theory. In this paper, we provide a simpler proof technique that involves only ideas from ODEs and Gr{\"o}nwall's lemma. Moreover, we provide an estimate on the number of switches needed for the time dependent weights of the narrow NODE to mimic the behavior of a NODE with a single layer wide neural network as the velocity field. 
\end{abstract}


\section{Introduction}

The problem that we are interested in is the approximation properties of time-one map $\Psi :\R^d \rightarrow \R^d $ of the neural ODE,
\begin{eqnarray}
 \label{eq:node}
\dot{x}(t) = A(t)\Sigma(W(t)x+b(t)) ~ ~~x(0)=x_0
\end{eqnarray}
where $A: [0,T] \rightarrow \mathbb{R}^{d \times d}$, $W: [0,T] \rightarrow  \mathbb{R}^{d \times d}$ and  $b : [0,T] \rightarrow \mathbb{R}^d$ are the weights for the neural network. Here, given $\sigma :\mathbb{R} \rightarrow \mathbb{R}$, referred to as the {\it activation function}, the function $\Sigma : \mathbb{R}^d \rightarrow \mathbb{R}^d$ above is given by
\begin{equation}
\Sigma(x) = [\sigma(x_1),...,\sigma(x_d)]^T
\label{eq:vecact}
\end{equation}
The time-one map $\Psi$ is defined by setting
 $\Psi(x_0) = x(1)$ for each $x_0 \in \mathbb{R}^d$. Classical results on approximation properties of shallow neural networks do not immediately apply since $A(t)$ is fixed to be a square matrix of input dimension. For this reason, the neural ODE \eqref{eq:node} is referred to as {\it narrow} following the definition in \cite{alvarez2024interplay}. 
 
 In \cite{tabuada2022universal}, it has been shown that flows of this ODE can be used to approximate continuous maps on $\R^d$ in the uniform norm by considering a NODEs on $\mathbb{R}^{2d+1}$, provided the activation function satisfies a quadratic equation. In \cite{ruiz2023neural}, it has been shown that any $L_2$ map can be approximated using a narrow NODE. The capability NODEs for transporting probability densities have been explored in \cite{elamvazhuthi2022neural,ruiz2023neural,alvarez2024interplay}. For $L_p$ maps, universal approximations have been shown in \cite{cheng2025interpolation}.

The strategy used in this paper departs from the techniques of the other cited works, where either the arguments are constructive or control theoretic arguments are used. Firstly, we note that it can be shown that flows of  \eqref{eq:node} can approximate flows of the wide NODE
 \begin{eqnarray}
 \label{eq:node2}
\dot{x}(t) = \sum_{i=1}^mA_i\Sigma(W_ix+b_i) ~ ~~x(0)=x_0
\end{eqnarray}

This, in turn, can be used to show that flow maps of \eqref{eq:node} can be used to approximate flows of any dynamical system of the form 
 \begin{eqnarray}
 \label{eq:dynamical}
\dot{x}(t) = V(x,t) ~ ~~x(0)=x_0
\end{eqnarray}
where $V(x,t)$ is a time dependent vector field and hence, $V(x,t)$ can be approximated by an arbitrarily wide network $\sum_{i=1}^mA_i\Sigma(W_ix+b_i) $ for any $t$. Therefore, narrow neural ODEs inherit the approximation properties of their shallow but wide counterparts. This strategy is used in \cite{elamvazhuthi2022neural} and in \cite{elamvazhuthi2023learning} to study flow approximation properties using narrow NODEs. In \cite{elamvazhuthi2023learning} it was in fact shown that the dimension of the weight parameters used to approximate maps can be taken to be $m \times d$, with $m$ less than the input dimension $d$, by additionally exploiting geometric non-commutative properties of some chosen $m$ basis vector fields, owing to diffeormophism controllability results due to \cite{agrachev2009controllability}.

Our goal in this paper is to derive quantitative rates of approximation of $\eqref{eq:node}$. Existing quantitative rates are either established to approximate homoemorphism in the $L_2$ norm \cite{ruiz2023neural} or for the purposes of generative modeling \cite{ruiz2023neural,alvarez2024interplay}. In contrast, we are interested in quantitative rates for approximation of flow maps in the uniform norm. The problem we address is how many switches are needed in $A(t), W(t), b(t)$, to approximate the flow of  $\eqref{eq:node2}$. In this way, our result complements \cite{ruiz2023neural,alvarez2024interplay}. While we restrict to reference flows of the differential equations of the form \eqref{eq:node2}, an extension to general flows is straightforward since shallow but wide neural networks are known to be dense in the set of continuous functions. Hence, quantitative approximation  properties of shallow neural networks would immediately translate to quantitative approximation properties of narrow NODEs.

Before we present our analysis, we give some brief intuition behind our proof strategy. The idea behind our analysis is the following. Given a differential equation of the form,
\[\dot{x} = \sum_{i=1}^m  f_i(x):= F(x),\]
for some vector-fields $f_i(x)$, one can approximate the flow map $\Psi_F(x)$ by interatively switching between solutions of the ODEs of individual vector-fields,
\[\dot{x} =  f_i(x)\]
In our context $f_i(x) = A_i \Sigma(W_ix +b_i)$. This idea is well known from the theory of relaxed controls in control theory \cite{fattorini1999infinite}, averaging theory in dynamical systems \cite{sander2007averaging} and operator splitting in numerical methods \cite{macnamara2017operator}. In control theoretic terminology, one can think of this approximation scheme as that of using ``zeroth-order" Lie brackets to achieve approximation. That is, the non-commutativity of the vector fields is not used in any special way to approximate the flow of $F(x)$.

\section{Analysis}
For the purposes of the analysis, we will make the following assumptions on the activation function $\sigma$.
\begin{assumption}
	\label{asmp:neura}
		The activation function $\sigma$ is globally Lipschitz. That is, there exists $K>0$ such that 
		\begin{equation}
		|\sigma (x) - \sigma (y)| \leq K|x-y|, 
		\label{asmp:neura1}
		\end{equation}
		for all $x,y\in \mathbb{R}$.
\end{assumption}

	

The above assumption is satisfied by both, the Sigmoid activation function, as well as the Rectified Linear unity (ReLu) activation function.

	Let $A_i,W_i \in \mathbb{R}^{d\times d}, b_i \in \mathbb{R}^d$ be weight parameters for $i= 1,...,m$. For each $N \in \mathbb{Z}_+$. Let $Q^N$ be a $\frac{T}{N}$-periodic vector field defined by 
	\begin{equation}
	\label{eq:defosc}
	Q_{t+\frac{nT}{N}}(x) =  m A_i\Sigma(W_ix+b_i), ~~ t \in [\frac{iT}{mN},\frac{(i+1)T}{mN}), 
	\end{equation}
	for all $n \in \{0,...,N-1\}$, $i \in \{ 0,1,...,m-1\}$ and $x \in \mathbb{R}^d$. 

    Given this vector field, we consider the equation,
\begin{equation}
\label{eq:perionode}
\dot{z}(t) = Q_{t+\frac{nT}{N}}(z)
\end{equation}
Note that the vector field $Q_t$ is of the form in \eqref{eq:node} for a piecewise constant functions $A(t),W(t)$ and $b(t)$.
    
Next, we consider the differential equation defined by
   \begin{equation}
   \label{eq:avgnode}
   \dot{y}(t) = \tilde{Q}(y) = \sum_{i=1}^m A_i\Sigma(W_iy+b_i)
   \end{equation}

Before we present our resutls, we introduce some notation that will be used throughout the paper. The open ball around $x \in \Rd$ will be denoted by
$B_R(x) := \{r \in \R^d ; |x-y|  < R\}$.  For a vector $x \in \Rd$, $|x|$ will denote the $L_1$ norm of $x$. For a matrix $A \in \R^{d\times d}$, $\|A\|_1$ will denote the operator norm induced by the $L_1$ norm on $\Rd$.

We first note some preliminary bounds on the derivatives of solutions of ODEs. 
\begin{lemma}
\label{lem:derbnd}
Suppose $x_0 \in B_r(0)$ and for some $c,L>0$, $V:\Rd \times [0,T] \rightarrow \Rd$ is continuous  and uniformly Lipschitz continuous in the space variable. Additionally, assume that the vector field $V$ satisfies linear growth
\begin{equation}
|V(x,t)| \leq c+ L|x|
\end{equation}
for all $(x,t) \in \mathbb{R}^d \times [0,T]$. Then the solution $x(t)$ of the differential equation \eqref{eq:dynamical} satisfies,
\[|\dot{x} (t)| \leq c + L (r+ ct) e^{Lt} \] for almost every $t \in [0,T]$.
\end{lemma}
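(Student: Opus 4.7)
The plan is to reduce the bound on $|\dot{x}(t)|$ to a bound on $|x(t)|$ by directly invoking the linear growth hypothesis, and then control $|x(t)|$ via Grönwall's lemma.

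First, I would observe that since $V$ is continuous and Lipschitz in space, classical ODE theory guarantees a unique absolutely continuous solution $x(t)$ with $\dot{x}(t) = V(x(t),t)$ for almost every $t \in [0,T]$. Therefore, by the linear growth hypothesis,
\[
|\dot{x}(t)| = |V(x(t),t)| \leq c + L|x(t)|
\]
almost everywhere. Thus the statement reduces to establishing the pointwise bound $|x(t)| \leq (r + ct)e^{Lt}$.

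For this, I would integrate the ODE and take norms: since $x_0 \in B_r(0)$, one has $|x_0| \leq r$, so
\[
|x(t)| \leq |x_0| + \int_0^t |V(x(s),s)|\, ds \leq r + ct + L\int_0^t |x(s)|\, ds.
\]
Here the ``forcing'' term $\alpha(t) := r + ct$ is nondecreasing, which puts us exactly in the hypotheses of the standard (integral) Grönwall inequality with constant kernel $L$. Applying that inequality yields $|x(t)| \leq (r+ct)e^{Lt}$, and plugging this back into $|\dot{x}(t)| \leq c + L|x(t)|$ gives the claimed estimate.

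There is essentially no obstacle here; the only small care needed is to invoke the version of Grönwall's lemma that applies to a nondecreasing (rather than constant) forcing term, so that the factor $(r+ct)$ appears inside the exponential factor rather than a looser bound that would replace $ct$ by $cT$. Alternatively, one can differentiate $u(t) := e^{-Lt}\int_0^t |x(s)|\,ds$ to deduce the same estimate without citing Grönwall explicitly, which is conceptually the cleanest route.
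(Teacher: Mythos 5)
Your proof is correct and follows essentially the same route as the paper: bound $|x(t)|$ via the integral form of the ODE and Grönwall's inequality, then substitute that bound back into the linear growth estimate for $|\dot{x}(t)| = |V(x(t),t)|$. The one point you add that the paper glosses over is the explicit remark that one needs the Grönwall variant allowing a nondecreasing (rather than constant) forcing term $r + ct$, which is indeed what is required to keep the factor $(r+ct)$ inside the product with $e^{Lt}$.
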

\begin{proof}
From the linear growth condition, we can conclude that 
\[ |x (t)| \leq |x_0| + \int_0^t c + L|x(\tau)|d\tau\]
From Gr{\"o}nwall's Lemma, this implies that 
\[ |x (t)| \leq (|x_0|+ ct) e^{Lt}\]

We know that 
\[ |V(x,t)| \leq  c+ L |x|\]
for all $x \in \Rd$ and $t \in [0,T]$.
This implies that 
\begin{align*}
|\dot{x}(t)| & \leq c + L|x(t)| \\
 &  \leq c + L (|x_0|+ ct) e^{Lt} \\
 &  \leq c + L (r+ ct) e^{Lt}
\end{align*}
\end{proof}

In the following proposition, we observe some bounds on the vector fields $\tilde{Q}(x)$ and $Q_t(x)$. We skip the proof, as one can derive it easily from the Lipschitz property of $\sigma$.

\begin{proposition}
\label{prop:bndlip}
The vector fields $\tilde{Q}$ and $Q_t$ satisfy the following linear growth bounds
\begin{equation}
|\tilde{Q}(x)|, |Q_t(x)| \leq  m   \sup_{i \in \{1,..m \}} \|A_i\|_{2} |\Sigma(b_i)| + K\|A_i\|_1  \|W_i\|_1 |x|
\end{equation}
for all $x \in \mathbb{R}^d$ and all $t \in [0,T]$, where $K$ is the Lipschitz constant of the activation function $\sigma$.

Moreover, both $Q$ and $Q_t$ are Lipschitz in $x$, uniformly with respect to time $t$, with Lipschitz constant given by
\begin{equation}
\tilde{K} = m   \sup_{i \in \{1,..N \}} K \||A_i\|_1\|W_i\|_1 
\end{equation}
\end{proposition}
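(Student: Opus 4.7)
The plan is to prove both the linear growth bound and the Lipschitz estimate by decomposing the vector fields into their constituent affine-plus-activation summands and invoking Assumption~\ref{asmp:neura} coordinatewise. The first observation I would record is that Assumption~\ref{asmp:neura} upgrades automatically to $\Sigma$ in the $L_1$ norm used in the paper: for any $u,v \in \R^d$,
\[
|\Sigma(u) - \Sigma(v)| = \sum_{j=1}^d |\sigma(u_j) - \sigma(v_j)| \leq K \sum_{j=1}^d |u_j - v_j| = K|u-v|,
\]
so $\Sigma$ itself is $K$-Lipschitz. Specialising $v=0$ (or more precisely comparing to $\Sigma(b_i)$ after shifting) gives the pointwise control $|\Sigma(W_ix+b_i)| \leq |\Sigma(b_i)| + K\|W_i\|_1 |x|$ via compatibility of $\|\cdot\|_1$ with $|\cdot|$.

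For the linear growth bound on $\tilde Q$, I would apply the triangle inequality and operator norm compatibility to get
\[
|\tilde Q(x)| \leq \sum_{i=1}^m \|A_i\|_1 \bigl( |\Sigma(b_i)| + K\|W_i\|_1 |x| \bigr),
\]
and then bound each summand by the supremum over $i$ so that the $m$ terms collapse into the factor $m$ shown in the statement. For $Q_t(x)$, I would point out that by the definition \eqref{eq:defosc}, at each $t$ the field equals $m A_i \Sigma(W_i x + b_i)$ for exactly one index $i$, so the same pointwise estimate applies with an extra factor of $m$, yielding the identical bound uniformly in $t$. (I would also note that $\|A_i\|_2 \geq \|A_i\|_1 / \sqrt{d}$ type constants play no essential role here; the statement as written uses $\|A_i\|_2$ in front of $|\Sigma(b_i)|$, which is at worst a harmless overestimate of the $\|A_i\|_1$ my argument produces.)

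For the Lipschitz bound, the computation is parallel:
\[
|\tilde Q(x) - \tilde Q(y)| \leq \sum_{i=1}^m \|A_i\|_1 |\Sigma(W_i x + b_i) - \Sigma(W_i y + b_i)| \leq \sum_{i=1}^m \|A_i\|_1 K \|W_i\|_1 |x-y|,
\]
after which taking the supremum over $i$ and counting $m$ terms yields the constant $\tilde K$. The same estimate applied piecewise to $Q_t$ gives a Lipschitz constant uniform in $t$, since on every subinterval $Q_t$ is just $m A_i \Sigma(W_i \cdot + b_i)$ for some $i$, whose Lipschitz constant is at most $m K \|A_i\|_1 \|W_i\|_1 \leq \tilde K$.

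There is no real obstacle here, only bookkeeping: the only care required is keeping the matrix and vector norms consistent (using the fact that $\|\cdot\|_1$ is the operator norm induced by $|\cdot|$, so $|Mv| \leq \|M\|_1 |v|$ for all $M,v$), and remembering that the scaling factor $m$ in the definition of $Q^N$ is what accounts for the discrepancy between integrating one summand at a time over a fraction $1/m$ of each period and integrating the full sum $\tilde Q$. Both bounds then follow from a single application of Assumption~\ref{asmp:neura} plus the triangle inequality.
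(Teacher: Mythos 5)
The paper does not actually supply a proof of Proposition~\ref{prop:bndlip}; it explicitly states ``we skip the proof, as one can derive it easily from the Lipschitz property of $\sigma$.'' Your argument --- lift the scalar Lipschitz bound to $\Sigma$ in the $\ell_1$ norm, combine with the induced operator norm $\|\cdot\|_1$ to control each summand $A_i\Sigma(W_i\cdot+b_i)$, then take suprema over $i$ and note that $Q_t$ at any fixed $t$ is a single summand scaled by $m$ --- is precisely the elementary derivation the authors are alluding to, and it is correct. One small caveat: your parenthetical claim that $\|A_i\|_2$ ``is at worst a harmless overestimate'' of $\|A_i\|_1$ is not right, since neither induced norm dominates the other in general (only $\|A\|_2 \le \sqrt{d}\,\|A\|_1$ and $\|A\|_1 \le \sqrt{d}\,\|A\|_2$ hold). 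Your computation naturally yields $\|A_i\|_1$, which is consistent with the constant $c$ appearing later in the paper, so the $\|A_i\|_2$ in the proposition's display is most plausibly a typo for $\|A_i\|_1$; it is not a bound your argument (or any obvious one) actually establishes as written.
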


Given these bounds, we can state our quantitative approximation result. The proof is heavily inspired by  \cite{artstein2007young} where the author derives quantitative approximation error of highly oscillatory dynamical systems in the context of averaging theory based on the concept of Young measures. For simpler exposition, we don't invoke averaging theory or the heavy machinery of Young measures. Instead, we explicitly prove the required bounds simplifying the technique of \cite{artstein2007young} for the purposes of this paper.

\begin{lemma}
Assume that the solution of \eqref{eq:perionode} and \eqref{eq:avgnode} satisfy $z(0), y(0) \in B_r(0)$. Then
\begin{equation}
|z(t) - y(t)| \leq \Big ( 2\frac{T}{N}  X +    \tilde{K} X \frac{ T^2}{2N} ) e^{\tilde{K}T}
\end{equation}
for all $t \in [0,T]$, where the constants $X, \tilde{K}, C >0 $ are given by
\begin{align*}
& X = c   + L(r+ct)e^{Lt}  \\
& \tilde{K} =   m   \sup_{i \in \{1,..N \}} K \|A_i\|_1\|W_i\|_1  \\
& c= m   \sup_{i \in \{1,..N \}} \|A_i\|_1|\Sigma(b_i)| \\
& L= K\|A_i\|_1  \|W_i\|_1 
\end{align*}
\end{lemma}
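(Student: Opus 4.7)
The plan is to observe that $\tilde{Q}$ is precisely the time-average of $Q_t$ over one period, then quantify the closeness of the solutions to the oscillatory and averaged systems via a period-by-period splitting argument followed by Gr\"onwall's inequality. The only nontrivial input beyond basic ODE manipulations is bounding the derivatives of $y$ and $z$ uniformly.

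First, I would record the key structural identity: on a period of length $T/N$, the field $Q_{\tau+nT/N}$ takes the value $m A_i\Sigma(W_i\cdot+b_i)$ on a subinterval of length $T/(mN)$, so its time average over that period is exactly
\[\frac{N}{T}\sum_{i=1}^m \frac{T}{mN}\, m A_i\Sigma(W_i x+b_i) \;=\; \tilde{Q}(x).\]
Second, I would invoke Lemma~\ref{lem:derbnd} together with Proposition~\ref{prop:bndlip} applied to both \eqref{eq:perionode} and \eqref{eq:avgnode} to conclude that $|\dot{y}(t)|,|\dot{z}(t)|\le X$ for almost every $t\in[0,T]$; this is exactly the role of the growth constants $c$ and $L$ appearing in the definition of $X$.

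Third, assuming $z(0)=y(0)$ (so that no initial-error term is present), I would write
\[z(t)-y(t) \;=\; \int_0^t \bigl[Q_\tau(z(\tau))-Q_\tau(y(\tau))\bigr]\,d\tau \;+\; \int_0^t \bigl[Q_\tau(y(\tau))-\tilde{Q}(y(\tau))\bigr]\,d\tau.\]
The first integral is dominated by $\tilde{K}\int_0^t|z(\tau)-y(\tau)|\,d\tau$ by the uniform Lipschitz bound in Proposition~\ref{prop:bndlip}, and is absorbed into the Gr\"onwall step. The second integral, which is the oscillatory averaging term, is the main obstacle and drives the whole estimate.

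To control the oscillatory term I would split $[0,t]$ into the completed periods $I_n=[nT/N,(n+1)T/N]$, $n=0,\dots,k-1$, plus a leftover interval of length at most $T/N$. On each $I_n$, freezing $y$ at $y_n:=y(nT/N)$, the telescoping identity
\[\int_{I_n}\!\!\bigl[Q_\tau(y(\tau))-\tilde{Q}(y(\tau))\bigr]d\tau \;=\; \int_{I_n}\!\!\bigl[Q_\tau(y(\tau))-Q_\tau(y_n)\bigr]d\tau + \int_{I_n}\!\!\bigl[Q_\tau(y_n)-\tilde{Q}(y_n)\bigr]d\tau + \int_{I_n}\!\!\bigl[\tilde{Q}(y_n)-\tilde{Q}(y(\tau))\bigr]d\tau\]
reduces matters to three pieces: the middle term vanishes by the averaging identity, while the outer two are controlled by $|y(\tau)-y_n|\le X(\tau-nT/N)$ and the Lipschitz constant $\tilde{K}$, each contributing at most $\tilde{K}X(T/N)^2/2$ per period. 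Summing over the $\le N$ completed periods yields a contribution of order $\tilde{K}XT^2/(2N)$. On the leftover interval, the integrand is bounded crudely by $|Q_\tau(y(\tau))|+|\tilde{Q}(y(\tau))|\le 2X$ using the derivative bound (equivalently the linear growth of both fields evaluated along $y$), giving at most $2XT/N$. Finally, feeding these into Gr\"onwall's lemma with the Lipschitz term yields
\[|z(t)-y(t)|\;\le\;\Bigl(2\tfrac{T}{N}X+\tilde{K}X\tfrac{T^2}{2N}\Bigr)e^{\tilde{K}T},\]
as claimed. The only subtle bookkeeping is handling the leftover interval and making sure the Lipschitz inflation from the two outer terms produces the stated $1/(2N)$ constant rather than $1/N$; this is where the $|y(\tau)-y_n|\le X(\tau-nT/N)$ estimate, integrated against $d\tau$ over a period, gives the $(T/N)^2/2$ factor.
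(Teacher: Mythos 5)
Your strategy matches the paper's in all essentials: write the difference as a Lipschitz term absorbed by Gr\"onwall plus an oscillatory averaging term, freeze the trajectory on each period, use the averaging identity $\tilde{Q}=\frac{N}{T}\int_{I_n}Q_\tau\,d\tau$ to kill the middle piece of the telescope, control the outer pieces by the derivative bound $X$ from Lemma~\ref{lem:derbnd}, and bound the final partial period crudely by $2XT/N$. The only cosmetic difference is that you insert $Q_\tau(y)$ as the intermediate quantity and run the averaging argument along $y$, while the paper inserts $\tilde{Q}(z)$ and runs it along $z$; this is immaterial.

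There is, however, a concrete quantitative gap in your bookkeeping, and you flag the right spot without closing it. You freeze $y$ at the \emph{left endpoint} $y_n=y(nT/N)$, so each of the two outer telescope terms is bounded by
\[\tilde{K}\int_{I_n}|y(\tau)-y_n|\,d\tau \le \tilde{K}X\int_{I_n}(\tau-nT/N)\,d\tau = \tilde{K}X\,\tfrac{1}{2}\bigl(\tfrac{T}{N}\bigr)^2,\]
so the two outer terms together contribute $\tilde{K}X(T/N)^2$ per period, and summing over $\le N$ complete periods gives $\tilde{K}XT^2/N$, not the stated $\tilde{K}XT^2/(2N)$. The $\tfrac{1}{2}(T/N)^2$ factor you appeal to does not rescue the constant, because it is already spent producing $(T/N)^2/2$ per outer term. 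The paper's fix is to freeze at the \emph{midpoint} $s_n$ of each $I_n$: then $\int_{I_n}|\tau-s_n|\,d\tau=\tfrac{1}{4}(T/N)^2$, so the two outer terms give $\tfrac{1}{2}\tilde{K}X(T/N)^2$ per period and $\tilde{K}XT^2/(2N)$ in total, matching the statement. So replace $y_n=y(nT/N)$ with $y_n=y(s_n)$ where $s_n$ is the midpoint of $I_n$, and the rest of your argument goes through verbatim.
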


\begin{proof}
We compute the difference
\begin{align}
|z(t) - y(t)| & \leq  |\int^t_0 Q_t(z(\tau)) d\tau -\int^t_0 \tilde{Q}(y(\tau)) d\tau| \nonumber \\
& \leq  |\int^t_0 Q_t(z(\tau)) d\tau -\int^t_0 \tilde{Q}(z(\tau)) d\tau|+|\int^t_0  |\tilde{Q}(z(\tau))  - \tilde{Q}(y(\tau))| d\tau \nonumber
\\
& \leq  |\int^t_0 Q_t(z(\tau)) d\tau -\int^t_0 \tilde{Q}(z(\tau)) d\tau|+|\int^t_0  \tilde{K}|z(\tau)  - y(\tau)| d\tau 
\label{eq:ogbnd}
 \end{align}
where the last term follows from the Lipschtz property of $\tilde{Q}$ established in \ref{prop:bndlip}.
 We bound the first term
 \begin{align*}
|\int^t_0 Q_t(z(\tau)) d\tau -\int^t_0 \tilde{Q}(z(\tau)) d\tau|
 \end{align*}
 Let $[0,T]$ be partitioned into intervals $I_i$ of size $T/N$ such that $\cup_{i=1}^N I_i = [0,T]$. Let $s_i$ be the midpoints of $I_i$ for each $i = 1,..., m$. We define the midpoint approximation $\psi :[0,T] \rightarrow \mathbb{R}$ by
 \[\psi(\tau) = z(s_i) : = x_i ~~~ \tau \in I_i.\]
Let $R = (r+cT)e^{LT}$. We note that the solutions remain in $B_R(0)$ for all $t \in [0,T]$. From the definition above, we get the bound,
 \begin{align*}
&|\int^t_0 Q_t(z(\tau)) d\tau -\int^t_0 \tilde{Q}(z(\tau)) d\tau| \\
& \leq |\int^t_0 Q_t(\psi(\tau)) d\tau -\int^t_0 \tilde{Q}(\psi(\tau)) d\tau| + \int^t_0 |Q_t(z(\tau)) - Q_t(\psi(\tau)|d\tau  \\
& + \int^t_0 |\tilde{Q}_t(z(\tau)) - \tilde{Q}_t(\psi(\tau)|d\tau \\
& \leq |\int^t_0 Q_t(\psi(\tau)) d\tau -\int^t_0 \tilde{Q}(\psi(\tau)) d\tau| + \int^t_0 \tilde{K}|z(\tau) - \psi(\tau)|d\tau  \\
& + \int^t_0 \tilde{K} |z(\tau) - \psi(\tau)|d\tau \\
& \leq \sum_{i=1}^N |\int_{I_i \cap [0,t]} Q_t(\psi(\tau)) d\tau -\int^t_0 \tilde{Q}(\psi(\tau)) d\tau| + 2\int_{I_i \cap [0,t]}  \tilde{K}|z(\tau) - \psi(\tau)|d\tau  \\
& \leq \sum_{i=1}^N | \int_{I_i \cap [0,t]} Q_t(x_i) d\tau -\int_{I_i \cap [0,t]}  \tilde{Q}(x_i) d\tau|  +  2\sum_{i=1}^N \int_{I_i \cap [0,t]} \tilde{K}  X|\tau - s_i|d\tau 
\\
& \leq \ | \int_{I_N \cap [0,t]} Q_t(x_i) d\tau -\int_{I_N \cap [0,t]}  \tilde{Q}(x_i) d\tau|  +  2\sum_{i=1}^N \int_{I_i \cap [0,t]} \tilde{K}  X|\tau - s_i|d\tau 
\\
& = \frac{T}{N}  \sup_{x \in B_{R}(0)}|\tilde{Q}(x)| +\frac{T}{N}  \sup_{x\in B_R(0)}|Q_t(x)|+   \tilde{K} X \frac{T^2}{2N} \\
&  =  \frac{2T}{N}  X + \tilde{K} X \frac{T^2}{2N}
 \end{align*}

 In the above sequence of inequalities, the first term  $|z(\tau) - \psi(\tau)|$ was bounded by $X|\tau -s_i|$ using the Lipschitz property of $z(t)$ from the bound established in Lemma \ref{lem:derbnd}. Then the result follows by an application of Gr{\"o}nwall's lemma to the bound in \eqref{eq:ogbnd}.
\end{proof}

\bibliographystyle{plain}
\bibliography{l4dc}

\begin{thebibliography}{10}

\bibitem{agrachev2009controllability}
Andrei~A Agrachev and Marco Caponigro.
\newblock Controllability on the group of diffeomorphisms.
\newblock In {\em Annales de l'Institut Henri Poincar{\'e} C, Analyse non
  lin{\'e}aire}, volume~26, pages 2503--2509. Elsevier, 2009.

\bibitem{alvarez2024interplay}
Antonio {\'A}lvarez-L{\'o}pez, Arselane~Hadj Slimane, and Enrique Zuazua.
\newblock Interplay between depth and width for interpolation in neural odes.
\newblock {\em Neural Networks}, 180:106640, 2024.

\bibitem{artstein2007young}
Zvi Artstein.
\newblock A young measures approach to averaging.
\newblock In {\em Differential Equations, Chaos and Variational Problems},
  pages 15--28. Springer, 2007.

\bibitem{cheng2025interpolation}
Jingpu Cheng, Qianxiao Li, Ting Lin, and Zuowei Shen.
\newblock Interpolation, approximation, and controllability of deep neural
  networks.
\newblock {\em SIAM Journal on Control and Optimization}, 63(1):625--649, 2025.

\bibitem{elamvazhuthi2022neural}
Karthik Elamvazhuthi, Bahman Gharesifard, Andrea~L Bertozzi, and Stanley Osher.
\newblock Neural ode control for trajectory approximation of continuity
  equation.
\newblock {\em IEEE Control Systems Letters}, 6:3152--3157, 2022.

\bibitem{elamvazhuthi2023learning}
Karthik Elamvazhuthi, Xuechen Zhang, Samet Oymak, and Fabio Pasqualetti.
\newblock Learning on manifolds: Universal approximations properties using
  geometric controllability conditions for neural odes.
\newblock In {\em Learning for Dynamics and Control Conference}, pages 1--11.
  PMLR, 2023.

\bibitem{fattorini1999infinite}
Hector~O Fattorini.
\newblock {\em Infinite dimensional optimization and control theory},
  volume~54.
\newblock Cambridge University Press, 1999.

\bibitem{macnamara2017operator}
Shev MacNamara and Gilbert Strang.
\newblock Operator splitting.
\newblock In {\em Splitting methods in communication, imaging, science, and
  engineering}, pages 95--114. Springer, 2017.

\bibitem{ruiz2023neural}
Domenec Ruiz-Balet and Enrique Zuazua.
\newblock Neural ode control for classification, approximation, and transport.
\newblock {\em SIAM Review}, 65(3):735--773, 2023.

\bibitem{sander2007averaging}
J~Sander, F~Verhulst, and J~Murdock.
\newblock Averaging methods in nonlinear dynamical systems, vol. 59.
\newblock {\em Applied Mathematical Sciences (Springer, New York, NY, 2007)}.

\bibitem{tabuada2022universal}
Paulo Tabuada and Bahman Gharesifard.
\newblock Universal approximation power of deep residual neural networks
  through the lens of control.
\newblock {\em IEEE Transactions on Automatic Control}, 68(5):2715--2728, 2022.

\end{thebibliography}

\end{document}